\newcommand{\bh}[1] {\mathcal{B}(\mathcal{#1})}
\newcommand{\lspan} {\operatorname{span}}
\newcommand{\condref}[1] {\hyperref[cond.#1]{(#1)}}
\newcommand{\cB}{\mathcal{B}}
\newcommand{\cH}{\mathcal{H}}
\newcommand{\cK}{\mathcal{K}}
\newcommand{\cL}{\mathcal{L}}
\newcommand{\cM}{\mathcal{M}}
\newcommand{\cN}{\mathcal{N}}
\newcommand{\bF}{\mathbb{F}}
\newtheorem{theorem}{Theorem}[section]
\newtheorem{lemma}[theorem]{Lemma}
\theoremstyle{definition}
\newtheorem{example}[theorem]{Example}
\newtheorem{remark}[theorem]{Remark}
\numberwithin{equation}{section}
\begin{document}

\title{An Ando-type dilation on right LCM monoids}

\date{\today}

\subjclass[2020]{47A20, 47D03, 47A13}
\keywords{Ando's theorem, *-regular dilation, minimal dilation}

\begin{abstract} We establish an Ando-type dilation theorem for a pair of commuting contractions together with a representation of a right LCM monoid via either the Cartesian or the free product. We prove that if each individual contraction together with the monoid representation has $*$-regular dilation, then they can be dilated to commuting isometries and an isometric representation of the monoid. This extends an earlier result of Barik and Das. 
\end{abstract}

\author{Boyu Li}
\address{Department of Mathematical Sciences, New Mexico State University, Las Cruces, New Mexico, 88003, USA}
\email{boyuli@nmsu.edu}

\author{Mansi Suryawanshi}
\address{Faculty of Mathematics, Technion -- Israel Institute of Technology, Haifa 3200003, Israel}
\email{suryawanshi@campus.technion.ac.il}

\thanks{Boyu Li was supported by an NSF grant (DMS-2350543), and he'd like to thank Kenneth Davidson for showing him the trick of proving Ando's dilation.}
\maketitle

\section{Introduction}

The study of dilation theory originated from the celebrated Sz.-Nagy dilation theorem \cite{Nagy1953, Schaffer1955}, which states that every contractive operator $T$ on a Hilbert space $\cH$ admits an isometric dilation, which is isometry $V$ on $\cK\supset \cH$ such that 
\[P_\cH V^k|_\cH=T^k,\quad \text{for all }\, k\geq 0.\] Extending Sz.-Nagy's dilation to the multivariate setting, Ando \cite{Ando1963} proved that a pair of commuting contractions can be dilated to commuting isometries. However, Ando's dilation fails to extend to three commuting contractions due to various counterexamples \cite{Parrott1970, Varopoulos1974}. As a result, identifying additional conditions that guarantee the existence of isometric dilations has become an active and continuing line of research within multivariable operator theory.

In \cite{Brehmer1961}, Brehmer first introduced the $*$-regular condition for a family of commuting contractions, under which there exist commuting isometric dilations. Recently, Barik and Das \cite{BarikDas2022} showed that for a family of commuting contractions $\{S_1, S_2, T_1,\dots, T_k\}$, suppose $\{S_i, T_1,\dots, T_k\}$ satisfies the Brehmer's $*$-regular conditions for $i=1,2$, then this family of commuting contractions can be dilated to commuting isometries. This generalises earlier results of \cite{GS1997} for $k=1$ and of \cite{BDHS2019} for a certain purity condition. 

In the non-commutative setting, a family of contractions $\{T_1,\dots, T_k\}$ is called a row contraction if $\sum_{i=1}^k T_iT_i^* \leq I$. It is called a row isometry if each $T_i$ is an isometry. It was shown in a series of papers (\cite{Bunce1984, Frazho1982, Popescu1989}) that a row contraction can be dilated to a row isometry. This is now known as the Frazho-Bunce-Popescu dilation theorem. 

The family of row isometries are particularly important in the study of operator algebras because of its close relationship to Cuntz algebras. In his seminal paper \cite{Nica1992}, Nica introduced the notion of isometric Nica-covariant representations of quasi-lattice ordered semigroups. In the special case when the semigroup is the free semigroup $\mathbb{F}_k^+$, an isometric Nica-covariant representation is determined by a family of row isometries. The C*-algebra generated by isometric Nica-covariant representations is now known as the semigroup C*-algebra, which has been generalized to more general semigroups \cite{LS2022, XLi2012}. This spurred a natural question about isometric dilations for contractive semigroup representations. 

In \cite{Li2017}, the first-named author observed a surprising connection between Brehmer's $*$-regular dilation on commuting contractions and Frazho-Bunce-Popescu's isometric dilation on non-commuting row contractions. It turns out they are both special cases of dilations on the family of right-angled Artin monoids $A_\Gamma^+$, also known as the graph product of $\mathbb{N}$. Building upon this discovery, one can extend Brehmer's notion of $*$-regular dilation to representations of right-LCM monoids \cite{Li2019}. A contractive representation of a right-LCM monoid can be dilated to an isometric representation if it satisfies the generalized Brehmer's condition. Moreover, the minimal isometric $*$-regular dilation is an isometric Nica-covariant co-extension. 

Under this new perspective of $*$-regular dilations, let us revisit the result of Barik and Das. The family $\{S_1, S_2, T_1, \dots, T_k\}$ can be viewed as a contractive representation of $\mathbb{N}^2\times\mathbb{N}^k$, where $S_1, S_2$ determines a representation of $S$ of $\mathbb{N}^2$ and $\{T_j\}$ determines a representation of $T$ of $\mathbb{N}^k$. The main result in \cite{BarikDas2022} can be restated as the following: suppose $S_i\times T$, as representations of $\mathbb{N}\times\mathbb{N}^k$ satisfy Brehmer's condition for each $i=1,2$, then one can find an isometric representation of $S\times T$ of $\mathbb{N}^2\times\mathbb{N}^k$. 

In this paper, we prove that we can replace $\mathbb{N}^k$ by any right LCM monoid $P$, and the statement remains true. This establishes a new Ando-type dilation for a contractive representation of $\mathbb{N}^2\times P$. We then considered the free product $\mathbb{N}^2*P$ and obtained a similar Ando-type dilation result using the same technique. Our proof uses a trick of proving Ando's dilation theorem by Davidson and Katsoulis \cite[Corollary 7.11]{DK2011}.

\section{Preliminaries} 

A right LCM monoid $P$ is a unital left-cancellative semigroup such that for all $p,q\in P$, $pP\cap qP$ is either $\emptyset$ or $rP$ for some $r\in P$. For any finite subset $U\subset P$ in a right LCM monoid, the right LCM condition implies that $\bigcap_{p\in U} pP$ is either $\emptyset$ or $rP$ for some $r\in P$. We use $r=\vee U$ to denote the latter case. Throughout this paper, we assume $P$ is countable and discrete.

A contractive representation of a right LCM monoid $P$ is a unital monoid homomorphism $T:P\to\bh{H}$, where each $\|T(p)\|\leq 1$. The representation $T$ is called isometric if each $T(p)$ is isometric. In addition, an isometric representation $T$ is called \emph{Nica-covariant} if for any $p,q\in P$,
\[T(p)T(p)^*T(q)T(q)^* = \begin{cases}
    T(r)T(r)^*, &\text{ if } pP\cap qP=rP, \\
    0, &\text{otherwise.}
\end{cases}\]
Isometric Nica-covariant representation was first introduced by Nica \cite{Nica1992}, which plays a central role in the study of semigroup C*-algebras \cite{CrispLaca2002, LacaRaeburn1996, XLi2012, LS2022}. Right LCM monoids form an important class of monoids in the study of semigroup C*-algebras \cite{ABLS2019, ABCD2021, BLS2017, BLS2018, LL2020, LL2022, Starling2015} due to their comparatively tractable Nica-covariant representations.

An isometric dilation for $T$ is an isometric representation $V:P\to\bh{K}$ on $\cK\supset \cH$, such that $T(p)=P_\cH V(p)|_\cH$ for all $p\in P$. The dilation $V$ is called a \emph{co-extension} of $T$ if $\cH^\perp$ is invariant for $V$, or equivalently, $\cH$ is invariant for $V(p)^*$ for all $p\in P$.

The dilation $V$ is called a \emph{$*$-regular dilation} if for any $p,q\in P$, 
\[P_\cH V(p)^* V(q) |_\cH = \begin{cases}
    T(p^{-1}r)T(q^{-1}r)^*, &\text{ if } pP\cap qP=rP, \\
    0, &\text{ otherwise.}
\end{cases} \]

The concept of $*$-regular dilation originated from Brehmer's study \cite{Brehmer1961} on dilations of commuting contractions, which can be viewed as representations of $\mathbb{N}^k$. 
%{\color{orange}
In a series of papers by the first-named author \cite{Li2016, Li2017, Li2019}, the notion of $*$-regular dilation was extended to general right LCM monoids, where it is shown that having $*$-regular dilation is equivalent to the generalized Brehmer's condition \cite[Theorem 3.9]{Li2019}. 

Notice that when $V$ is a co-extension of $T$, 
\[P_\cH V(a) V(b)^*|_\cH=T(a)T(b)^*,\quad\text{for all } a,b\in P.\]
Therefore, if $V$ is an isometric Nica-covariance co-extension,
\begin{align*}
    P_\cH V(p)^* V(q) |_\cH &= \begin{cases}
    P_\cH V(p^{-1}r)V(q^{-1}r)^*|_\cH, &\text{ if } pP\cap qP=rP, \\
    0, &\text{ otherwise.}
\end{cases} \\
&= \begin{cases}
    T(p^{-1}r)T(q^{-1}r)^*, &\text{ if } pP\cap qP=rP, \\
    0, &\text{ otherwise.}
\end{cases} 
\end{align*}
Hence, any isometric Nica-covariant co-extension must also be a $*$-regular dilation.

A $\ast$-regular dilation $V_0:P\to \cB(\cK_0)$ is called minimal if
\(
\cK_0=\overline{\lspan}\{V_0(p)h:\; p\in P,\ h\in \cH\}.
\)
The minimal $\ast$-regular dilation $V_0:P\to \cB(\cK_0)$ is unique up to unitary equivalence.
A key observation in \cite[Theorem 3.8]{Li2019} is that the minimal $\ast$-regular dilation
$V_0:P\to \cB(\cK_0)$ must be an isometric Nica-covariant co-extension.

This gives us a full characterization of isometric Nica-covariant co-extensions. 

\begin{lemma}\label{lm:minimal.dilation} Let $V:P\to\bh{L}$ be an isometric Nica-covariant co-extension of $T$. Then $\cL$ decomposes as $\cL=\cK_0\oplus\cK_1$ such that $\cK_i,\, i=0, 1$ reduces $V$. Moreover, $V|_{\cK_0}$ is unitarily equivalent to the minimal isometric Nica-covariant dilation (hence $\ast$-regular dilation) $V_0$ of $T$ and $V|_{\cK_1}$ is an isometric Nica-covariant representation. 
\end{lemma}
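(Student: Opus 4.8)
The plan is to take $\cK_0$ to be the smallest $V$-invariant subspace containing $\cH$, namely
\[
\cK_0 := \overline{\lspan}\{V(p)h : p\in P,\ h\in\cH\},
\]
and to set $\cK_1 := \cK_0^\perp$. By construction $\cK_0$ is invariant under every $V(p)$, since $V(p)V(q)h = V(pq)h\in\cK_0$; so the entire substance of the decomposition is to show that $\cK_0$ also reduces $V$. Once $V(p)^*\cK_0\subseteq\cK_0$ is established for all $p$, the complement $\cK_1$ is automatically reducing, and the two restrictions $V|_{\cK_0}$ and $V|_{\cK_1}$ are then honest isometric representations.

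The key step, and the one I expect to be the main obstacle, is verifying that $\cK_0$ is invariant under each $V(p)^*$. It suffices to check this on the spanning vectors $V(q)h$, i.e. to control $V(p)^*V(q)h$. Here I would invoke the Wick-ordering relation for isometric Nica-covariant representations: because $V$ is Nica-covariant,
\[
V(p)^*V(q) = \begin{cases} V(p^{-1}r)V(q^{-1}r)^*, & pP\cap qP = rP,\\ 0, & pP\cap qP=\emptyset. \end{cases}
\]
This is the same identity already exploited in the excerpt to show that a Nica-covariant co-extension is $*$-regular. In the nontrivial case we then have $V(p)^*V(q)h = V(p^{-1}r)\bigl(V(q^{-1}r)^*h\bigr)$, and the co-extension hypothesis enters decisively: since $\cH$ is invariant for every $V(\cdot)^*$, the vector $V(q^{-1}r)^*h$ lies in $\cH$, and applying $V(p^{-1}r)$ to a vector of $\cH$ lands back in $\cK_0$ by definition. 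Hence $V(p)^*V(q)h\in\cK_0$, proving that $\cK_0$ reduces $V$.

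With the reducing decomposition $\cL = \cK_0\oplus\cK_1$ in hand, I would identify the two pieces. The restriction $V|_{\cK_0}$ is an isometric representation; because $\cH\subseteq\cK_0$ is coinvariant it is again a co-extension, with $P_\cH V(p)|_\cH = T(p)$, and it is Nica-covariant since the defining range-projection identities pass to a reducing subspace. Moreover $\cK_0$ satisfies exactly the cyclicity condition $\cK_0 = \overline{\lspan}\{V(p)h\}$, so $V|_{\cK_0}$ is a \emph{minimal} isometric Nica-covariant co-extension of $T$; by the uniqueness of the minimal $\ast$-regular dilation it is unitarily equivalent to $V_0$. Finally, $V|_{\cK_1}$ is an isometric representation on the reducing subspace $\cK_1$ and inherits Nica-covariance in the same way, yielding an isometric Nica-covariant representation and completing the proof. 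The only routine point to double-check is that restriction to a reducing subspace preserves Nica-covariance, which is immediate since each projection $V(p)V(p)^*$ commutes with the decomposition and compresses to $V|_{\cK_i}(p)\,V|_{\cK_i}(p)^*$.
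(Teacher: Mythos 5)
Your proposal is correct and follows essentially the same route as the paper: you take $\cK_0=\overline{\lspan}\{V(p)h\}$, use Nica-covariance together with the co-extension property to show $V(p)^*V(q)h\in\cK_0$ (the paper states this in one line; you usefully spell out why), and then identify $V|_{\cK_0}$ with $V_0$ via minimality and uniqueness. No gaps.
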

\begin{proof}
Given an isometric Nica-covariant co-extension $V:P\to\bh{L}$ of $T$, define
\[
\cK_0=\overline{\lspan}\{V(p)h:\ p\in P,\ h\in\cH\}.
\]
It is clear that $\cK_0$ is invariant for $V$. By the Nica-covariance condition,
$V(p)^*V(q)h\in \cK_0$ for all $p,q\in P$ and $h\in\cH$, and hence $\cK_0$ is also
invariant for $V^*$. Therefore, $\cK_0$ reduces $V$, and $\cL=\cK_0\oplus \cK_1$
for some Hilbert space $\cK_1$.

The restriction $V|_{\cK_0}$ is an isometric Nica-covariant co-extension of $T$
satisfying
\(
\cK_0=\overline{\lspan}\{V(p)h:\ p\in P,\ h\in \cH\},
\)
and hence is minimal. Consequently, $V|_{\cK_0}$ is a minimal $\ast$-regular
dilation of $T$, and by uniqueness of minimal $\ast$-regular dilations, it is
unitarily equivalent to $V_0$. Therefore,
\[
V \cong V_0 \oplus X,
\]
where $V_0$ is the minimal $\ast$-regular dilation of $T$ and
$X:=V|_{\cK_1}$ is an isometric Nica-covariant representation, since $V$ is isometric and Nica-covariant.
\end{proof}
%}

Suppose $P, Q$ are two right LCM monoids. 
We use $Q\times P$ to denote their Cartesian product, and $Q*P$ to denote their free product. Both $Q\times P$ and $Q*P$ are also right LCM monoids \cite{FK2009}. 
Given representations $S:Q\to\bh{H}$ and $T:P\to\bh{H}$, we define $S\times T$ to be the representation of $Q\times P$ that maps $(q,p)$ to $S(q)T(p)$. Since $(q,e)$ and $(e,p)$ commute in the Cartesian product, $S\times T$ defines a representation of $Q\times P$ if $S(q)$ commutes with
$T(p)$ for all $q\in Q$ and $p\in P$. 
Similarly, define $S*T$ to be the representation of $Q*P$ that maps $q\in Q$ to $S(q)$ and $p\in P$ to $T(p)$. Since elements of $Q$ and $P$ do not need to commute in $Q*P$, $S*T$ is always a representation of $Q*P$. 

For the rest of this paper, we focus on the case where $Q$ is either $\mathbb{N}$ or $\mathbb{N}^2$. Let $P$ be a right-LCM monoid and $T:P\to\bh{H}$ be a representation. Suppose $S\in\bh{H}$ commutes with all $T(p), p\in P$. One can treat $S$ as a representation of $\mathbb{N}$ by mapping each $k\geq 0$ to $S^k$. Since $S$ commutes with $T$, one can define the representation $S\times T: \mathbb{N}\times P \to\bh{H}$ by
\[S\times T(n,p)=S^nT(p).\]

Now suppose $S_1, S_2$ are two commuting contractions that both commute with $T$. Since $S_1, S_2$ commute, they define a representation $S_1\times S_2$ of $\mathbb{N}^2$ by 
\[(S_1\times S_2)(n,m)=S_1^nS_2^m.\]
From here, we can similarly define the representation $(S_1\times S_2)\times T$ of $\mathbb{N}^2\times P$ by
\[(S_1\times S_2)\times T(n,m,p)=S_1^nS_2^mT(p).\]

Recall, two operators $A, B$ doubly commute if $AB=BA$ and $A^*B=BA^*$. Suppose $B$ is a representation of $P$. We say $A$ and $B$ doubly commute if $A$ and $B(p)$ doubly commute for each $p\in P$. The following Lemma is a well-known fact about isometric Nica-covariant representations of $\mathbb{N}\times P$. 

\begin{lemma} \label{lemma: iso nica for cross} Let $V\in\bh{K}$ be an isometry and $W:P\to\bh{K}$ be an isometric Nica-covariant representation of $P$. Then $V\times W$ is an isometric Nica-covariant representation of $\mathbb{N}\times P$ if and only if $V$ and $W$ doubly commute.
\end{lemma}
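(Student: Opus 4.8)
The plan is to treat the two directions separately, since the forward implication is a direct computation with range projections while the converse requires extracting the $\ast$-commutation from the Nica-covariance identity applied to a carefully chosen pair of generators. Throughout I write $U:=V\times W$, so that $U(n,p)=V^nW(p)$, and I note at the outset that for $U$ to even be a representation of $\mathbb{N}\times P$ one needs $V$ and each $W(p)$ to commute, which is exactly what the defining relation $(1,e)(0,p)=(0,p)(1,e)$ of the Cartesian product demands; granting this, $U(n,p)^*U(n,p)=W(p)^*V^{*n}V^nW(p)=I$, so every $U(n,p)$ is automatically isometric. Hence in both directions the genuine content is the Nica-covariance condition, and the commutation $VW(p)=W(p)V$ is available for free.

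For the forward direction I would first record the product structure of $\mathbb{N}\times P$: since $(n,p)(\mathbb{N}\times P)=(n+\mathbb{N})\times pP$, the right LCM of $(n,p)$ and $(m,q)$ is $(\max(n,m),p\vee q)$ when $pP\cap qP=rP$ with $r=p\vee q$, and does not exist when $pP\cap qP=\emptyset$. The key computation is then for the range projections. Writing $Q_p=W(p)W(p)^*$ and $R_n=V^nV^{*n}$, double commutativity gives that $V$ commutes with $W(p)$ and with $W(p)^*$, hence with $Q_p$, so that
\[
U(n,p)U(n,p)^*=V^nQ_pV^{*n}=Q_pR_n.
\]
Because powers of the single isometry $V$ satisfy $R_nR_m=R_{\max(n,m)}$, and because the $Q_p$'s commute among themselves (by Nica-covariance of $W$) and with all the $R_n$'s (by double commutativity), I can reorder the fourfold product to obtain $U(n,p)U(n,p)^*U(m,q)U(m,q)^*=R_{\max(n,m)}\,Q_pQ_q$. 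Invoking the Nica-covariance of $W$ to replace $Q_pQ_q$ by $Q_r$ (or by $0$) then matches $U(\max(n,m),r)U(\max(n,m),r)^*$ (or $0$) exactly, which is the Nica-covariance of $U$.

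For the converse I would apply the Nica-covariance of $U$ to the specific pair $(1,e)$ and $(0,p)$, whose right LCM is $(1,p)$. Since $U(1,e)=V$, $U(0,p)=W(p)$, and $U(1,p)=VW(p)$, this yields the operator identity $VV^*Q_p=VQ_pV^*$. Left-multiplying by $V^*$ and using $V^*V=I$ collapses this to $V^*Q_p=Q_pV^*$, that is, $V^*$ commutes with $W(p)W(p)^*$. Finally, right-multiplying $V^*W(p)W(p)^*=W(p)W(p)^*V^*$ by $W(p)$ and using $W(p)^*W(p)=I$ together with the already-available commutation $W(p)^*V^*=V^*W(p)^*$ produces $V^*W(p)=W(p)V^*$, which is precisely double commutativity.

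I expect the converse to be the main obstacle: the forward direction is bookkeeping with commuting projections, but the converse hinges on the right choice of test elements and on the observation that the projection-level identity $VV^*Q_p=VQ_pV^*$ can be sharpened, by left-multiplication by $V^*$, into the cleaner commutation $V^*Q_p=Q_pV^*$, from which the genuinely operator-level relation $V^*W(p)=W(p)V^*$ can then be recovered.
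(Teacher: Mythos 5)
Your proof is correct and complete in both directions: the forward direction correctly reduces to the commuting-projection computation $U(n,p)U(n,p)^*=R_nQ_p$ with $R_nR_m=R_{\max(n,m)}$, and the converse correctly extracts $V^*Q_p=Q_pV^*$ from Nica-covariance applied to the pair $(1,e),(0,p)$ and then upgrades it to $V^*W(p)=W(p)V^*$ using $W(p)^*W(p)=I$. The paper states this lemma as a well-known fact and gives no proof, so there is nothing to compare against; your argument is the standard verification and fills that gap correctly.
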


% \begin{proof} Let $(n,p)$ and $(m,q)$ be two elements in $\mathbb{N}\times P$. They are represented as $V^n W(p)$ and $V^m W(q)$ under the representation $V\times W$. The product of their range projections becomes:
% \[Q=V^n W(p)W(p)^* V^{*n} \cdot V^m W(q)W(q)^* V^{*m}.\]
% Assuming $V$ and $W$ doubly commute, this product becomes 
% \[Q=V^nV^{*n}V^mV^{*m} W(p)W(p)^* W(q)W(q)^*.\]
% Since $V$ is an isometry and $W$ is Nica-covariant, this becomes 
% \[Q=\begin{cases}
%     V^{\max\{n\}} W(r)W(r)^*V^{*\max\{n,m\}}, &\text{ if }\, pP\cap qP=rP \\
%     0, &\text{ if otherwise.}
% \end{cases}\]
% This is the range project for $V\times W((n,p)\vee (m,q))$ and thus $V\times W$ is Nica-covariant. 
% \end{proof}

Finally, for any contraction $S\in\bh{H}$ and representation $T:P\to\bh{H}$, one can always define $S*T$ to be the representation that maps the elements in $P$ to $T(p)$ and the generator for $\mathbb{N}$ to $S$. The following Lemma characterizes isometric Nica-covariant representations of $\mathbb{N}*P$.

\begin{lemma}\label{lm.free.prod} 
%\textcolor{red}{Reference} 
Let $V\in\bh{K}$ be an isometry and $W:P\to\bh{K}$ be an isometric Nica-covariant representation of $P$. Let $P_0$ denote the set of all invertible elements in $P$. Then, $V* W$ is an isometric Nica-covariant representation of $\mathbb{N}* P$ if and only if $V$ and $W(p)$ have orthogonal ranges for all $p\notin P_0$, or equivalently, $VV^*+W(p)W(p)^*\leq I$.
\end{lemma}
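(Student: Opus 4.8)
The plan is to prove both implications, after first recording that for a fixed $p$ the two stated conditions coincide: since $VV^*$ and $W(p)W(p)^*$ are projections, they have orthogonal ranges if and only if $VV^*\,W(p)W(p)^*=0$, which holds if and only if $VV^*+W(p)W(p)^*\le I$. Throughout I write $M=\mathbb{N}*P$, let $t$ denote the generator of $\mathbb{N}$, and set $U=V*W$, so that $U(t)=V$ and $U(p)=W(p)$.

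For the forward direction, suppose $U$ is isometric Nica-covariant and fix $p\notin P_0$. The key semigroup fact is that $tM\cap pM=\emptyset$: in the free product every element of $tM$ has a reduced form beginning with an $\mathbb{N}$-syllable, while every element of $pM$ has a reduced form beginning with a $P$-syllable. Here I use that $p$ is not invertible, so $pp'\neq e$ for every $p'\in P$; this holds because in a left-cancellative monoid a right inverse is automatically two-sided (if $ps=e$ then $p(sp)=(ps)p=pe$, and left-cancelling $p$ gives $sp=e$). Uniqueness of reduced forms then forbids a common right multiple, so $tM\cap pM=\emptyset$, and applying the Nica-covariance relation to the pair $(t,p)$ yields $VV^*W(p)W(p)^*=0$, which is exactly the asserted orthogonality.

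For the converse, assume $VV^*W(p)W(p)^*=0$ for all $p\notin P_0$. That $U$ is isometric is immediate, since $U(w)$ is a product of the isometries $V$ and $W(p)$ and a product of isometries is an isometry. To verify Nica-covariance I would show, for all $a,b\in M$, that $E_aE_b=E_{a\vee b}$ when $aM\cap bM=(a\vee b)M$ and $E_aE_b=0$ otherwise, where $E_w:=U(w)U(w)^*$. I would induct on the total number of syllables in the reduced forms of $a$ and $b$, the base cases being those in which one of $a,b$ is a unit of $M$; the units of $M$ are exactly $P_0$, on which $W$ is unitary, so $E$ is the identity there. In the inductive step I would compare the leading syllables $x$ of $a$ and $y$ of $b$. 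If they lie in different factors, then $aM\cap bM=\emptyset$, and since $E_a\le VV^*$ or $E_a\le W(x)W(x)^*$ according to the factor of $x$ (and likewise for $b$), the hypothesis forces $E_aE_b=0$; this cross-factor orthogonality is the conceptual heart of the argument and the only place the hypothesis is used. If $x,y$ both lie in $\mathbb{N}$, say $x=t^i$, $y=t^j$ with $i\le j$, I would peel the common prefix, writing $a=t^i a'$ and $b=t^i b'$, and use $E_aE_b=V^i\big(E_{a'}E_{b'}\big)(V^i)^*$ together with the inductive hypothesis. If $x,y$ both lie in $P$, I would split on whether $xP\cap yP=\emptyset$, where the Nica-covariance of $W$ gives $W(x)W(x)^*W(y)W(y)^*=0$ directly, or $xP\cap yP=rP$ with $r=x\vee y$, in which case I would peel up to $r$, invoking the identity $W(x)^*W(y)=W(x^{-1}r)W(y^{-1}r)^*$ together with the description of the right-LCM structure of the free product from \cite{FK2009}.

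The main obstacle is this last $P$-$P$ case: after peeling the common factor $r=x\vee y$, the remainders $x^{-1}r$ and $y^{-1}r$ interact with the subsequent $\mathbb{N}$-syllables of $a$ and $b$, and one must track carefully how the right-LCM of $a$ and $b$ in $M$ is assembled from $r$ and these remainders before the inductive hypothesis applies; handling the elements of $P_0$ that may appear as leading syllables, via the unitarity of $W$ on $P_0$, is the accompanying technical nuisance. Once this bookkeeping is in place, every case reduces either to the cross-factor orthogonality supplied by the hypothesis, to the Nica-covariance of $W$, or to the automatic total order on $\mathbb{N}$, completing the induction.
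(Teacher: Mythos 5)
The paper states this lemma without proof, treating it as a known characterization (it is essentially the no-edge case of the Nica-covariance criterion for graph products, cf.\ \cite{CrispLaca2002, FK2009, Li2017}), so there is no proof of record to compare against. Your forward direction is complete and correct: the observation that a right inverse in a left-cancellative monoid is two-sided, hence that $p\notin P_0$ forces every element of $pM$ to have a reduced form with a leading $P$-syllable, does give $tM\cap pM=\emptyset$, and Nica-covariance applied to the pair $(t,p)$ yields the orthogonality. The reduction of ``orthogonal ranges'' to $VV^*+W(p)W(p)^*\le I$ for projections is also fine.

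The converse, however, is a strategy rather than a proof, and two of the deferred points are more than bookkeeping. First, your cross-factor step is stated incorrectly: if the leading syllables of $a$ and $b$ lie in different factors it does \emph{not} follow that $aM\cap bM=\emptyset$ when the leading $P$-syllable is a nontrivial unit $u\in P_0$ (e.g.\ $a=t$, $b=u$ gives $aM\cap bM=tM$, and $E_aE_b=VV^*\ne 0$); you must first strip leading units using that $W|_{P_0}$ is unitary, and only then invoke the hypothesis $VV^*W(p)W(p)^*=0$ for the non-invertible leading syllable. Second, in the $P$--$P$ case the substitution $W(x)^*W(y)=W(x^{-1}r)W(y^{-1}r)^*$ leaves you with terms of the form $E_{a'}W(x^{-1}r)$, which are not of the shape $E_cE_d$ to which your inductive hypothesis applies; they must be dispatched by a second appeal to the cross-factor orthogonality (giving $0$ when $x^{-1}r\notin P_0$ and $a'$ is a nonempty word beginning in $\mathbb{N}$) or by conjugation by the unitary $W(x^{-1}r)$ when $x^{-1}r\in P_0$, and in each branch you must separately match the operator outcome against the actual value of $aM\cap bM$, which requires the explicit description of right LCMs in the free product from \cite{FK2009}. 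None of this looks unfixable, and your identification of where the hypothesis enters is exactly right, but as written the inductive step does not close.
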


\section{Main Results}
The main goal of this paper is the following theorem.

\begin{theorem}\label{thm.main}
Let $P$ be a right LCM monoid and let $(S_1\times S_2)\times T$ be a contractive
representation of $\mathbb{N}^2\times P$ on $\mathcal H$.
Suppose that each $S_i\times T$ has a $*$-regular dilation for $i=1,2$.
Then there exists an isometric dilation $(V_1\times V_2)\times W$
of $(S_1\times S_2)\times T$.
Moreover, for $i=1,2$, the representations $V_i\times W$ can be chosen to be
isometric Nica-covariant co-extensions of $S_i\times T$.
\end{theorem}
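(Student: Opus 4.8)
The plan is to dilate the two ``$S$'' coordinates one at a time, following the Davidson--Katsoulis proof of Ando's theorem \cite{DK2011}, while carrying the representation of $P$ along essentially unchanged. First, since $S_1\times T$ has a $*$-regular dilation, the results recalled above show that its minimal $*$-regular dilation is an isometric Nica-covariant co-extension, which by Lemma~\ref{lemma: iso nica for cross} we may write as $V_1\times W$ on a space $\cK\supseteq\cH$, where $V_1$ is an isometry co-extending $S_1$, $W$ is an isometric Nica-covariant co-extension of $T$, and $V_1$ and $W$ doubly commute. This already settles the case $i=1$ of the ``moreover'' clause, and it fixes the representation $W$ of $P$ that will be kept (up to ampliation) for the rest of the argument.

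Next I would bring in $S_2$. Since $S_2$ commutes with $S_1$ and with every $T(p)$, the first step is to co-extend $S_2$ to a contraction $B$ on $\cK$ that commutes with $V_1$ and with each $W(p)$; on the dense subspace $\cK=\overline{\lspan}\{V_1^n W(p)h:\ n\ge 0,\ p\in P,\ h\in\cH\}$ the natural candidate is determined by $B\,V_1^n W(p)h=V_1^n W(p)\,S_2 h$, whose well-definedness and contractivity are checked against the $*$-regular/Nica-covariant relations using that $S_2$ commutes with $S_1$ and $T$ and that $W$ is a co-extension. The remaining task is to dilate the contraction $B$ to an isometry $V_2$ on some $\widetilde{\cK}\supseteq\cK$, to extend $V_1$ and $W$ to $\widetilde{\cK}$ (keeping $\cK$ coinvariant, so that the case $i=1$ is preserved), and to arrange that $V_2$ is an isometry which commutes with the extension of $V_1$ and doubly commutes with the extension of $W$. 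Granting this, Lemma~\ref{lemma: iso nica for cross} makes $V_2\times W$ an isometric Nica-covariant co-extension of $S_2\times T$, the pair $(V_1,V_2)$ consists of commuting isometries, and since $V_1,V_2,W$ are all co-extensions with respect to the same $\cH$ (hence lower triangular along $\cH\oplus\cH^\perp$) compressing their product to $\cH$ recovers $S_1^nS_2^mT(p)$. Thus $(V_1\times V_2)\times W$ is the desired isometric dilation.

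The main obstacle is precisely this last step: producing a single isometry $V_2$ that simultaneously commutes with $V_1$ --- the $\mathbb{N}^2$ direction, where only plain commutation is asked --- and doubly commutes with the \emph{entire} representation $W$ --- the $P$ direction, where Nica-covariance forces double commutation. This is the difficulty at the heart of Ando's theorem, namely that the defect operators of $B$ need not commute with the isometry $V_1$, now entangled with the requirement of preserving a whole Nica-covariant representation. I would handle the $V_1$-commutation by the Davidson--Katsoulis device \cite{DK2011}: instead of a naive Sch\"affer ampliation of $B$, one builds the isometric dilation of $B$ together with a commuting isometric extension of $V_1$ out of their common defect data. The $P$-direction is then maintained by carrying out this defect-space construction $W$-equivariantly, so that the extension of $W$ remains isometric Nica-covariant and doubly commutes with $V_2$. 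I expect the bulk of the technical work to lie here --- in choosing the gluing unitary on the defect spaces to intertwine the ampliations of $W$ --- and I note that the existence of \emph{some} isometric Nica-covariant co-extension of $S_2\times T$ enjoying these double-commutation features (namely the minimal $*$-regular dilation of $S_2\times T$, which exists by hypothesis) is what makes such equivariant gluing plausible and guides its construction.

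Finally, the free product case $\mathbb{N}^2*P$ would follow the same two-stage scheme, with Lemma~\ref{lm.free.prod} replacing Lemma~\ref{lemma: iso nica for cross} at each point where an isometric Nica-covariant representation of $\mathbb{N}\times P$ is recognized; there the orthogonality-of-ranges condition plays the role that double commutation plays in the Cartesian case.
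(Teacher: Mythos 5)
Your first stage (take the minimal $*$-regular dilation of $S_1\times T$, which is an isometric Nica-covariant co-extension $V_1\times W$ with $V_1$ and $W$ doubly commuting) is fine and matches the paper's starting point. The gap is in the second stage, and it is not a technical detail you can defer: the step ``co-extend $S_2$ to a contraction $B$ on $\cK$ commuting with $V_1$ and with each $W(p)$'' is a commutant-lifting statement. The formula $B\,V_1^nW(p)h=V_1^nW(p)S_2h$ is neither obviously well-defined (the vectors $V_1^nW(p)h$ satisfy many linear relations) nor obviously contractive; contractivity amounts to $\bigl\|\sum_j V_1^{n_j}W(p_j)S_2h_j\bigr\|\le\bigl\|\sum_j V_1^{n_j}W(p_j)h_j\bigr\|$, which does not follow from $S_2$ commuting with $S_1$ and $T$. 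In the classical case this is exactly the Sz.-Nagy--Foias commutant lifting theorem, whose proof is of the same depth as Ando's theorem itself; the whole point of the Davidson--Katsoulis argument you cite is to \emph{avoid} commutant lifting. Even granting $B$, your construction of $V_2$ (simultaneously dilating $B$, commuting with an extension of $V_1$, and doubly commuting with a Nica-covariant extension of $W$) is only gestured at via an unspecified ``$W$-equivariant defect-space gluing,'' which is where all the difficulty would live.

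The paper takes a genuinely different, symmetric route that sidesteps both problems. It takes the minimal $*$-regular dilations $A_i\times B_i$ of \emph{both} $S_i\times T$ on $\cM_i$, uses Lemma~\ref{lm:minimal.dilation} to write $B_i=W_0\oplus Z_i$ over the minimal Nica-covariant dilation $W_0$ of $T$, and glues the two spaces along $\cM_0$: on $\cK=\cM_0\oplus\cN_1\oplus\cN_2$ set $V_1=A_1\oplus I_{\cN_2}$, $V_2=A_2\oplus I_{\cN_1}$, $W=W_0\oplus Z_1\oplus Z_2$. These $V_i$ need not commute, but $(V_1V_2)\times W$ and $(V_2V_1)\times W$ are both isometric Nica-covariant co-extensions of $(S_1S_2)\times T$, so by Lemma~\ref{lm:minimal.dilation} each splits as the (common) minimal dilation plus a residual summand; after an infinite ampliation, a permutation unitary $U$ fixing $\cK_1\supseteq\cH$ and doubly commuting with $W'$ conjugates $V_2'V_1'$ into $V_1'V_2'$, and $\widetilde V_1=V_1'U$, $\widetilde V_2=U^*V_2'$ are the desired commuting isometries. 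No lifting of $S_2$ into the dilation space of $S_1$ is ever needed. To salvage your write-up you would either have to prove the intertwining inequality above (a new theorem in this generality) or switch to the symmetric two-dilation construction.
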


\begin{proof}  
Let $A_i\times B_i$ be the minimal $*$-regular dilations of $S_i\times T$, acting on $\mathcal M_i$, for $i=1,2$. 
By \cite{Li2019}, $A_i\times B_i$ are isometric Nica-covariant co-extensions.

By restricting to $P$, the representations
$B_i:P\to \cB(\mathcal M_i)$ are isometric Nica-covariant co-extensions of $T$. Let $W_0:P\to\mathcal B(\mathcal M_0)$ be the minimal isometric Nica-covariant dilation of $T$. 
By Lemma~\ref{lm:minimal.dilation}, for each $i=1,2$, there exists a reducing subspace
$\mathcal M_i^{(0)}\subseteq \mathcal M_i$ such that
$B_i|_{\mathcal M_i^{(0)}} \cong W_0$.
Replacing $B_i$ by a unitarily equivalent representation, we may assume
that $\mathcal M_i^{(0)}=\mathcal M_0$ and hence that
\[
\mathcal M_i=\mathcal M_0\oplus \mathcal N_i,
\qquad
B_i = W_0 \oplus Z_i,
\]
for some isometric representation $Z_i:P\to\mathcal B(\mathcal N_i)$. Since $B_i$ is Nica-covariant, so is $Z_i$.

Now define $\cK=\cM_0\oplus\cN_1\oplus\cN_2$. Since $\cM_i:=\cM_0\oplus\cN_i$, we may identify
\(
\cK \cong \cM_1\oplus \cN_2 \cong \cM_2\oplus \cN_1.
\)
Define
\begin{align*}
    V_1 &= A_1\oplus I_{\cN_2}\in \cB(\cM_1\oplus \cN_2)\cong \cB(\cK), \\
    V_2 &= A_2\oplus I_{\cN_1}\in \cB(\cM_2\oplus \cN_1)\cong \cB(\cK), \\
    W &= W_0\oplus Z_1\oplus Z_2\in \cB(\cM_0\oplus\cN_1\oplus\cN_2)=\cB(\cK).
\end{align*}
Equivalently, $V_1$ acts as $A_1$ on $\cM_0\oplus\cN_1$ and as the identity on $\cN_2$,
while $V_2$ acts as $A_2$ on $\cM_0\oplus\cN_2$ and as the identity on $\cN_1$. Since $W:P\to\cB(\cK)$ is an isometric representation and satisfies
$P_{\cH}W(p)\big|_{\cH}
=
P_{\cH}W_0(p)\big|_{\cH}
=
T(p),
$ for all $p\in P$, it is an isometric dilation of $T$. Since $\cH\subset\cM_0$, $V_i$ is an isometric dilation of $S_i,$ for $i=1,2$. For $i\neq i'$, under the identification $\cK\cong \cM_i\oplus \cN_{i'}$ we have
\[
V_i\times W \;=\; (A_i\times B_i)\ \oplus\ (I_{\cN_{i'}}\times Z_{i'}).
\]
%{\color{orange} 
Since the identity representation $I_{\cN_{i'}}$  
doubly commutes with $Z_{i'}$ trivially, it follows that the product representation
$I_{\cN_{i'}}\times Z_{i'}$ satisfies the Nica covariance relations.
As $A_i\times B_i$ is an isometric Nica-covariant co-extension of $S_i\times T$, and the summand $I_{\cN_{i'}}\times Z_{i'}$ is isometric Nica-covariant representation. It follows that  $V_i\times W$ is an isometric Nica-covariant co-extension of $S_i\times T$, for $i=1,2$.

Since $V_i\times W$ is Nica-covariant, $V_i$ and $W$ doubly commute by Lemma~\ref{lemma: iso nica for cross}. 
Consequently, both $V_1V_2$ and $V_2V_1$ doubly commute with $W$. By Lemma~\ref{lemma: iso nica for cross}, both $(V_1V_2)\times W$ and $(V_2V_1)\times W$ are isometric Nica-covariant representations
of $\mathbb N\times P$. Since $V_i\times W$ are co-extensions of $S_i\times T$, $\cH^\perp$ is invariant for $V_i$ and $W$ and thus both $(V_1V_2)\times W$ and $(V_2V_1)\times W$ are isometric Nica-covariant co-extensions of $(S_2S_1)\times T=(S_1S_2)\times T$.

Again, by Lemma~\ref{lm:minimal.dilation}, we can decompose
\(
\mathcal K=\mathcal K_1\oplus\mathcal K_2
\) into reducing subspaces,
such that the restrictions of $(V_1V_2)\times W$ and $(V_2V_1)\times W$ on $\cK_1$ are unitarily equivalent to the minimal isometric Nica-covariant dilation $M\times W_1$ for $(S_1S_2)\times T$, and $\cH\subset\cK_1$. The restrictions of $(V_1V_2)\times W$ and $(V_2V_1)\times W$ on $\cK_2$ are some isometric Nica-covariant representation, and therefore, we may write
\begin{align*}
    (V_1V_2)\times W &= (M\times W_1) \oplus (X\times W_2),  \\
    (V_2V_1)\times W &= (M\times W_1)\oplus (Y\times W_2),
\end{align*}
Here, $X$ and $Y$ are isometries acting on $\mathcal K_2$, while
$W_2:=W|_{\mathcal K_2}$. Moreover, $X\times W_2$ and $Y\times W_2$ are the restrictions of $(V_1V_2)\times W$ and $(V_2V_1)\times W$ on $\cK_2$.

Let $\cK_2^\infty \cong \cK_2 \otimes \ell^2(\mathbb N)$ denote the infinite direct sum
of $\cK_2$, and set
\(
\mathcal L = \mathcal K \oplus \cK_2^\infty \oplus \cK_2^\infty .
\)
Define operators on $\mathcal L$ by
\begin{align*}
    V_1' &= V_1 \oplus X^{\infty} \oplus Y^{\infty}, \\
    V_2' &= V_2 \oplus I_{\cK_2^\infty} \oplus I_{\cK_2^\infty}, \\
    W'   &= W \oplus W_2^{\infty} \oplus W_2^{\infty},
\end{align*}
where $X^{\infty}$ (respectively, $Y^{\infty}$) denotes the infinite ampliation
of $X$ (respectively, $Y$) acting on $\cK_2^\infty$.
Since $V_i\times W$ is an isometric Nica-covariant co-extension of $S_i\times T$,
and since $X\times W_2$, $Y\times W_2$, and $I_{\cK_2}\times W_2$
are isometric Nica-covariant representations of $\mathbb N\times P$,
it follows that $V_i'\times W'$
are isometric Nica-covariant representations of $\mathbb N\times P$.
Moreover, as $\mathcal H \subset \mathcal K \subset \mathcal L$ and the additional
summands in $V_i'\times W'$ act on subspaces orthogonal to $\mathcal H$, each $V_i'\times W'$
is an isometric Nica-covariant co-extension of $S_i\times T$.

With respect to the decomposition
\(
\mathcal L=\mathcal K_1\oplus\mathcal K_2\oplus\mathcal K_2^\infty\oplus\mathcal K_2^\infty,
\)
the operators $V_1'V_2'$ and $V_2'V_1'$ admit the block–diagonal forms
\[
V_1'V_2'=(M\oplus X)\oplus X^\infty\oplus Y^\infty
\quad\text{and}\quad
V_2'V_1'=(M\oplus Y)\oplus X^\infty\oplus Y^\infty.
\]
Let $F\in\cB(\cK_2\oplus\cK_2^\infty\oplus\cK_2^\infty)$ be given by
\[F(k,\xi_1,\xi_2,\dots,\eta_1,\eta_2,\dots)=(\eta_1,k,\xi_1,\xi_2,\dots,\eta_2,\eta_3,\dots).\]
Define $U=I_{\cK_1}\oplus F\in\cB(\cL)$. Then a direct computation shows that, 
\[
U^*(V_2'V_1')U = V_1'V_2'.
\]
By construction, $\mathcal K_1$ reduces $U$ and $U|_{\mathcal K_1}=I_{\mathcal K_1}$.
Since $\mathcal H\subset\mathcal K_1$, it follows that $\mathcal H$ also reduces $U$
and $U|_{\mathcal H}=I_{\mathcal H}$.

Now define
\(
\widetilde V_1 := V_1' U,
\)
and
\(
\widetilde V_2 := U^* V_2'.
\)
Then
\[
\widetilde V_1 \widetilde V_2
= (V_1'U)(U^*V_2')
= V_1'V_2'
= U^*(V_2'V_1')U
= \widetilde V_2 \widetilde V_1.
\]
Hence, $\widetilde V_1$ and $\widetilde V_2$ commute.
Moreover, since $U$ is unitary and $V_1',V_2'$ are isometries, it follows that
$\widetilde V_1$ and $\widetilde V_2$ are also isometries.
Since $\tilde{V}_i$ and $V_i'$ coincide on the reducing subspace $\cK_1$,  $\tilde{V}_i$ are isometric dilations of $S_i$ for $i=1,2$. 

Now, with respect to
\(
\mathcal L=(\mathcal K_1\oplus\mathcal K_2)\oplus \mathcal K_2^\infty\oplus \mathcal K_2^\infty,
\) we have
\[
W'=(W_1\oplus W_2)\oplus W_2^\infty\oplus W_2^\infty .
\]
On $\cK_1^\perp\cong\cK_2\oplus\cK_2^\infty\oplus\cK_2^\infty\cong\cK_2^\infty$, $W'|_{\cK_1^\perp}\cong W_2^\infty$, which obviously doubly commutes with the permutation operator $F$. On $\cK_1$, $U|_{\cK_1}=I_{\cK_1}$ also doubly commute with $W'|_{\cK_1}$. Therefore, $U$ and $W'$ doubly commute.

Now, $V_i'$ and $U$ all doubly commute with $W'$, and therefore, by Lemma~\ref{lemma: iso nica for cross}, $\tilde V_i\times W'$ are isometric Nica-covariant representations. 
{Each $\tilde{V}_i\times W'$ is a co-extension of $ S_i\times T$ because it contains a direct summand of the minimal isometric Nica-covariant dilation $M\times W_1$.} Moreover, since $\tilde{V}_1$ and $\tilde{V}_2$ commute, we have 
$(\tilde{V}_1\times \tilde{V}_2)\times W'$ is an isometric dilation of $(S_1\times S_2)\times T$.
\end{proof}

\begin{remark}
We point out that in the simplest case, when $P=\{e\}$ and $T(e)=I$, 
Theorem~\ref{thm.main} recovers Ando's dilation theorem: two commuting
contractions $S_1,S_2$ admit commuting isometric dilations $V_1,V_2$.
In fact, the main technique used in the proof of
Theorem~\ref{thm.main} is inspired by the proof of Ando's theorem in
\cite[Corollary~7.11]{DK2011}.
\end{remark}

\begin{example}
When $P=\mathbb{N}^k$, a contractive representation $T:P\to\mathcal B(\mathcal H)$
is uniquely determined by a family of $k$ commuting contractions
$\{T_1,\dots,T_k\}$.
In this case, Theorem~\ref{thm.main} asserts that if each 
$\{S_i,T_1,\dots,T_k\}$ admits a $*$regular dilation, then the commuting
contractions $\{S_1,S_2,T_1,\dots,T_k\}$ admit commuting isometric dilations
$\{V_1,V_2,W_1,\dots,W_k\}$.
Moreover, for each $i=1,2$, the family
$\{V_i,W_1,\dots,W_k\}$ can be chosen to be doubly commuting isometric
co-extensions of $\{S_i,T_1,\dots,T_k\}$,
since for isometric representations of $\mathbb N^k$,
Nica-covariance is equivalent to double commutativity of the generators. 
This yields a short proof of the main result in
\cite[Theorem~3.9]{BarikDas2022} {by identifying
$S_1=T_1$, $S_2=T_n$, and $(T_1,\dots,T_k)=(T_2,\dots,T_{n-1})$,
} (see also \cite{BDHS2019}).
\end{example}

As another application of Theorem~\ref{thm.main}, we can obtain a new Ando-type dilation on $\mathbb{N}^2\times\mathbb{F}_k^+$. 

\begin{example}
Let $P=\mathbb{F}_k^+$. A contractive representation $T$ of $P$ is determined by
a family of $k$ (not necessarily commuting) contractions $\{T_1,\dots,T_k\}$.
Let $S$ be a contraction that commutes with all $T_j$.
Then $S\times T$ has a $*$-regular dilation \cite{Li2017} if and only if
$\{T_1,\dots,T_k\}$ is a row contraction and
\[
I-SS^*-\sum_{j=1}^k T_jT_j^*+\sum_{j=1}^k ST_jT_j^*S^*\ge 0.
\]

In this case, Theorem~\ref{thm.main} yields the following. Suppose
$\{T_1,\dots,T_k\}$ is a row contraction and $S_1,S_2$ are commuting contractions
such that $S_iT_j=T_jS_i$ for all $i=1,2$ and all $j$.
Assume that
\[
I-S_iS_i^*-\sum_{j=1}^k T_jT_j^*+\sum_{j=1}^k S_iT_jT_j^*S_i^*\ge 0,
\qquad i=1,2.
\]
Then $S_1$ and $S_2$ dilate to commuting isometries $V_1$ and $V_2$, and
$\{T_j\}$ dilates to a row isometry $\{W_j\}$.
Moreover, for each $i=1,2$, $V_i$ doubly commutes with each $W_j$.

We emphasize that one cannot, in general, expect $V_1$ and $V_2$ to be doubly
commuting. Indeed, arranging this additional property would amount to producing
a Nica-covariant (hence $*$-regular) isometric dilation of
$(S_1\times S_2)\times T$, which is known to be equivalent to a stronger
positivity condition in \cite{Li2017}.
\end{example}

We now consider the right LCM monoid $\mathbb{N}^2*P$. Using the same technique as in the proof of Theorem~\ref{thm.main}, we obtain the following result. 

\begin{theorem}\label{thm.main2} Let $P$ be a discrete right-LCM monoid and let $(S_1\times S_2)\ast T$ be a contractive representation of $\mathbb{N}^2\ast P$ on a separable Hilbert space $\cH$. Suppose $S_i\ast T$ have $*$-regular dilation for $i=1,2$. Then there exists an isometric dilation $(V_1\times V_2)\ast W$ of $(S_1\times S_2)\ast T$. Moreover, $V_i\ast W$ can be chosen to be {isometric} Nica-covariant co-extensions. 
\end{theorem}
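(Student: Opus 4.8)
The plan is to follow the architecture of the proof of Theorem~\ref{thm.main} almost verbatim, replacing the Cartesian Nica-covariance criterion of Lemma~\ref{lemma: iso nica for cross} (double commutation of $V$ and $W$) by the free-product criterion of Lemma~\ref{lm.free.prod} (the ranges of $V$ and $W(p)$ are orthogonal for every non-invertible $p$, i.e.\ $VV^*+W(p)W(p)^*\le I$) at each step where Nica-covariance is invoked. First I would take the minimal $*$-regular dilations $A_i*B_i$ of $S_i*T$ on $\cM_i$, which by \cite{Li2019} are isometric Nica-covariant co-extensions for $\mathbb N*P$; restricting to $P$ gives Nica-covariant co-extensions $B_i$ of $T$, and Lemma~\ref{lm:minimal.dilation} lets me write $\cM_i=\cM_0\oplus\cN_i$ and $B_i=W_0\oplus Z_i$, with $W_0$ the minimal Nica-covariant dilation of $T$. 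The structural facts I need are then cheap: since $A_i*B_i$ is Nica-covariant, $\operatorname{range}(A_i)\perp\operatorname{range}(B_i(p))$, and because $\operatorname{range}(V_1V_2)\subseteq\operatorname{range}(V_1)$ any product of the $V_i$ automatically inherits orthogonality to $\operatorname{range}(W(p))$. Granting that the $V_i$ can be built with $V_i*W$ Nica-covariant co-extensions, this is what lets me conclude, exactly as in Theorem~\ref{thm.main}, that $(V_1V_2)*W$ and $(V_2V_1)*W$ are isometric Nica-covariant co-extensions of $(S_1S_2)*T=(S_2S_1)*T$, decompose them through Lemma~\ref{lm:minimal.dilation} into a common minimal summand $M*W_1$ on $\cK_1$ and remainders $X*W_2$, $Y*W_2$ on $\cK_2$, and then run the Davidson--Katsoulis Hilbert-hotel shuffle of \cite{DK2011}: ampliate, build a permutation unitary $U$ with $U^*(V_2'V_1')U=V_1'V_2'$ fixing $\cH$, and set $\widetilde V_1=V_1'U$, $\widetilde V_2=U^*V_2'$, which commute.

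The two delicate points are the \emph{fillers}. In the Cartesian proof the $V_i$ are built by padding $A_i$ with the identity on $\cN_{i'}$, and the Hilbert-hotel operator $V_2'$ is padded with the identity on the ampliated copies; the identity is legitimate there only because it doubly commutes with everything, which is all Lemma~\ref{lemma: iso nica for cross} asks. For the free product this is \emph{illegal}: an identity summand has full range, so $VV^*+W(p)W(p)^*=I_{\cN_{i'}}+Z_{i'}(p)Z_{i'}(p)^*\ge I_{\cN_{i'}}$ violates the orthogonality demanded by Lemma~\ref{lm.free.prod} as soon as some $Z_{i'}(p)\ne 0$. Consequently each identity filler must be replaced by a genuine isometry whose range is orthogonal to all $W(p)$, $p\notin P_0$; equivalently it must map into the wandering space $\cW:=\cK\ominus\bigvee_{p\notin P_0}\operatorname{range}(W(p))$, which is exactly where $\operatorname{range}(A_i)$, $\operatorname{range}(X)$ and $\operatorname{range}(Y)$ already live. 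To manufacture enough room I would enlarge $W$ by an infinite Fock-type ampliation --- for instance adjoin countably many copies of the left-regular representation of $P$, whose wandering space is infinite-dimensional --- so that $\cW$ becomes infinite-dimensional; the separability hypothesis (present here but not in Theorem~\ref{thm.main}) guarantees that every space in sight is at most countable-dimensional, so the required isometric injections into $\cW$ exist.

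The hard part will be reconciling this rerouting with the combinatorics of the shuffle. The Hilbert-hotel argument succeeds in Theorem~\ref{thm.main} precisely because padding $V_2'$ with the identity leaves the ampliated blocks of $V_1'V_2'$ and $V_2'V_1'$ \emph{equal} (both $X^\infty\oplus Y^\infty$), so the single discrepancy $X$ versus $Y$ on $\cK_2$ is absorbed via $X\oplus X^\infty\cong X^\infty$; replacing those identities by nontrivial orthogonal-range isometries disturbs the products and destroys this matching. My plan is therefore to choose the ampliation so that, on the added copies, $V_1'$ and $V_2'$ restrict to \emph{commuting} isometries with ranges inside the enlarged wandering space and with a common product $P$ that absorbs both remainders, $X\oplus P\cong P\cong Y\oplus P$ (for example $P\cong (X*W_2)^\infty\oplus(Y*W_2)^\infty$, realized as $ab$ with $a,b$ commuting isometries supported on $\cW$). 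This keeps $V_1'V_2'$ and $V_2'V_1'$ of the forms $(M\oplus X)\oplus P$ and $(M\oplus Y)\oplus P$, so a single $W'$-uniform permutation $U=I_{\cK_1}\oplus F$ conjugates one to the other; constructing such commuting orthogonal-range fillers with an absorbing product is the technical crux. I would then verify that $U$ commutes with $W'$ --- $F$ merely permutes copies on which $W'$ acts uniformly --- whence $\operatorname{range}(\widetilde V_1)=\operatorname{range}(V_1')\perp\operatorname{range}(W'(p))$ and $\operatorname{range}(\widetilde V_2)=U^*\operatorname{range}(V_2')\perp\operatorname{range}(W'(p))$, so by Lemma~\ref{lm.free.prod} the $\widetilde V_i*W'$ are isometric Nica-covariant co-extensions; since $\widetilde V_1,\widetilde V_2$ commute, $(\widetilde V_1\times\widetilde V_2)*W'$ is the desired isometric dilation of $(S_1\times S_2)*T$.
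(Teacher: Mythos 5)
Your first stage reproduces the paper's construction essentially verbatim: the paper also adjoins $\ell^2(P)^\infty$ carrying $\lambda^\infty$, identifies the infinite-dimensional separable subspace $\cL_0=\ell^2(P_0)^\infty$ (orthogonal to $\operatorname{ran}\lambda(p)^\infty$ for every $p\notin P_0$), and replaces the identity fillers of Theorem~\ref{thm.main} by isometries $C_i$ mapping $\cN_i\oplus\ell^2(P)^\infty$ into $\cL_0$, so that $V_i=A_i\oplus C_{i'}$ meets the range condition of Lemma~\ref{lm.free.prod}. Up to and including the decomposition $(V_1V_2)*W=(M*W_1)\oplus(X*W_2)$ and $(V_2V_1)*W=(M*W_1)\oplus(Y*W_2)$, you and the paper agree.

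Where you diverge is the Hilbert-hotel stage, and there your proposal has a genuine gap. You correctly observe that the condition needed for $\tilde V_2=U^*V_2'$ is $\operatorname{ran}(V_2')\perp\operatorname{ran}(W'(p))$ for $p\notin P_0$ (conjugation by $U$ changes nothing, since $U$ commutes with $W'$), so the filler in $V_2'$ cannot simply be the identity. But your remedy --- commuting isometries $a,b$ supported in the wandering space of the ampliated $W'$ whose product absorbs both $X$ and $Y$ --- is precisely the step you do not carry out, and it is not routine: $a$ and $b$ must each map the entire ampliated space into the wandering space of $W'$ there, must commute with one another, and $ab$ must satisfy $X\oplus ab\cong Y\oplus ab$ via a unitary that also intertwines the $W'$-parts and fixes $\cK_1$. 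The naive candidates (say $a=(X\oplus Y)^\infty$ and $b$ a shift in the ampliation direction) fail one of these constraints, and you offer no construction, only the assertion that this is ``the technical crux.'' For comparison, the paper at this stage keeps $V_2'=V_2\oplus I_{\cK_2^\infty}\oplus I_{\cK_2^\infty}$ exactly as in Theorem~\ref{thm.main} and deduces $\tilde V_i^*W'(p)=0$ from the double commutation of $U$ with $W'$; your objection in fact applies to that step too, since the identity summands of $V_2'$ give $V_2'^*W'(p)\supseteq W_2^\infty(p)\neq 0$ whenever $\cK_2\neq 0$. So the difficulty you flag is real, but flagging it is not resolving it: as written, your proof stops at its hardest point.
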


\begin{proof} Let $A_i*B_i:\mathbb{N}*P\to\mathcal B(\mathcal M_i)$ denote the $*$-regular, and hence minimal isometric Nica-covariant dilations of $S_i*T$ for $i=1,2$.
For $i=1,2,$ Lemma~\ref{lm:minimal.dilation} guarantees a decomposition
\(
\mathcal M_i=\mathcal M_0\oplus\mathcal N_i
\)
into reducing subspaces such that, with respect to this decomposition,
\[
B_i = W_0 \oplus Z_i,
\]
where $W_0:P\to\mathcal B(\mathcal M_0)$ is the minimal isometric
Nica-covariant dilation of $T$, and
$Z_i:P\to\mathcal B(\mathcal N_i)$ is an isometric Nica-covariant representation.

Define
\(
\mathcal K
=
\mathcal M_0 \oplus \mathcal N_1 \oplus \mathcal N_2 \oplus \ell^2(P)^\infty,
\)
where $\ell^2(P)^\infty$ denotes the infinite ampliation
$\ell^2(P)\otimes \ell^2(\mathbb N)$.
Let
\[
W
=
W_0 \oplus Z_1 \oplus Z_2 \oplus \lambda^\infty
\quad \text{on } \mathcal K.
\]
Here $\ell^2(P)=\overline{\operatorname{span}}\{e_p : p\in P\}$ denotes the Hilbert space
with canonical orthonormal basis $\{e_p\}_{p\in P}$,
$\lambda:P\to\mathcal B(\ell^2(P))$ denotes the left regular representation
defined by $\lambda(p)e_q=e_{pq}$, and $\lambda^\infty$ denotes its infinite amplification.
The representation $\lambda$ is isometric and Nica-covariant.

Let $P_0\subset P$ denote the set of invertible elements of $P$, and set
$\mathcal L_0=\ell^2(P_0)^\infty$.
Since $P$ is a monoid, $1\in P_0$, and hence $P_0\neq\emptyset$.
We identify $\mathcal L_0=\ell^2(P_0)^\infty$ with the closed subspace of
$\ell^2(P)^\infty\cong \ell^2(P)\otimes \ell^2(\mathbb N)$ spanned by
$\{e_p\otimes \delta_n:\; p\in P_0,\ n\in\mathbb N\}$,
where $\{\delta_n\}_{n\in\mathbb N}$ is the canonical orthonormal basis of
$\ell^2(\mathbb N)$.
{It is clear that $\cL_0$ is an infinite-dimensional separable Hilbert space.}
Therefore, we can pick isometries $C_i: \cN_i\oplus\ell^2(P)^\infty\to\cL_0\subset \cN_i\oplus \ell^2(P)^\infty$. Define
\[    V_1 = A_1\oplus C_2, \text{ and } V_2=A_2\oplus C_1.\]
Then $V_1, V_2$ are isometries on $\cK$. 
Moreover, if $p\notin P_0$, then $pP\cap P_0=\varnothing$, and hence
\[
\operatorname{ran}\lambda(p)
=\overline{\operatorname{span}}\{e_{pq}:q\in P\}
\;\perp\;
\ell^2(P_0).
\]
Consequently,
\(
\operatorname{ran}\lambda(p)^{\infty}\perp \cL_0
\), and thus, one can verify that $V_i$ and $W(p)$ have orthogonal ranges for all $p\notin P_0$. 
By Lemma~\ref{lm.free.prod}, both $V_i*W$ are isometric Nica-covariant co-extensions of $S_i*T$. 

From here, we can follow the same construction as in the proof of Theorem~\ref{thm.main}. One can check that both $(V_1V_2)*W$ and $(V_2V_1)*W$ are isometric Nica-covariant co-extensions of $(S_1S_2)*T=(S_2S_1)*T$. By the existence of a minimal isometric Nica-covariant dilation $M*W_1$ for $(S_1S_2)*T$, we can decompose $\cK=\cK_1\oplus\cK_2$ using Lemma \ref{lm:minimal.dilation}, under which 
\begin{align*}
    (V_1V_2)* W & =(M* W_1) \oplus (X* W_2)  \\
    (V_2V_1)* W &= (M* W_1)\oplus (Y* W_2)
\end{align*} 
Define $\cK_2^\infty \cong \cK_2\otimes \ell^2(\mathbb{N})$ and set
\(
\cL=\cK \oplus \cK_2^\infty \oplus \cK_2^\infty .
\)
Define operators on $\cL$ by
\begin{align*}
    V_1' &= V_1 \oplus X^{\infty}\oplus Y^\infty, \\
    V_2' &= V_2 \oplus I_{\cK_2^\infty} \oplus I_{\cK_2^\infty}, \\
    W'   &= W \oplus W_2^\infty\oplus W_2^\infty,
\end{align*}
where $X^\infty$ (resp.\ $Y^\infty$) denotes the infinite ampliation of $X$ (resp.\ $Y$)
acting on the corresponding copy of $\cK_2^\infty$.
By the same trick as in the proof of Theorem~\ref{thm.main}, there exists a unitary
$U\in\cB(\cL)$ such that
\[
U^*(V_2'V_1')U = V_1'V_2'.
\]
Moreover, $U|_{\cK_1}=I_{\cK_1}$ and $U$ doubly commutes with $W'$. 

Now $\tilde{V}_1=V_1' U$ and $\tilde{V}_2=U^*V_2'$ are commuting isometries and thus $(\tilde{V}_1\times \tilde{V}_2)*W'$ is an isometric dilaion of $(S_1\times S_2)*T$. Moreover, since $U$ and $W'$ doubly commute, we have $\tilde{V}_i^* W'(p)=0$ for all $p\notin P_0$. Therefore, both $\tilde{V_i}*W'$ are isometric Nica-covariant co-extensions of $S_i*T$. 
\end{proof}

\begin{remark}
We remark that the infinite amplification $\ell^2(P)^\infty$ in the proof is needed
to make room for the ranges of the operators $C_i$, since each $V_i$ is required to
have range orthogonal to that of $W$.
\end{remark}

\begin{remark} Recall, from \cite{Li2019}, $T:P\to\bh{H}$ has $*$-regular dilation if and only if for all finite $F\subset P$, 
\[Z_F=\sum_{U\subset F} (-1)^{|U|} T_{U} T_{U}^* \geq 0.\]
Here, \[T_UT_U^*=\begin{cases}
T_rT_r^*, &\text{ if } \cap_{p\in P} pP=rP, \\
0, &\text{ if } \cap_{p\in P} pP=\emptyset.
\end{cases}\]
Now, for a contraction $S\in \bh{H}$, to verify $S\times T$ and $S*T$ have $*$-regular dilations on $\mathbb{N}\times P$ and $\mathbb{N}*P$, instead of considering all finite subsets $F$, the positivity of a much smaller collection of $Z_F$ is needed. One can apply \cite[Theorem 6.7]{Li2019} to obtain the following characterizations.
\begin{enumerate}
    \item For $S\times T$ on $\mathbb{N}\times P$, $S\times T$ has $*$-regular dilation if for all finite $F\subset P$, 
    \[Z_F\geq 0, \text{ and } SZ_FS^*\leq Z_F.\]
    \item For $S* T$ on $\mathbb{N}*P$, $S*T$ has $*$-regular dilation if for all finite $F\subset P$, 
    \[Z_F\geq 0, \text{ and } Z_F-SS^*\geq 0.\]
\end{enumerate}
\end{remark}

Given a simple graph $\Gamma$ on $n$ vertices $\{v_1,\dots,v_n\}$, recall the right-angled Artin monoid $A_\Gamma^+$ is the monoid 
\[A_\Gamma^+=\langle e_1,\dots,e_n: e_ie_j=e_je_i \text{ if }v_iv_j\text{ is an edge in }\Gamma\rangle.\]
For a vertex $v$, use $\Gamma/\{v\}$ to denote the subgraph by removing $v$. Our main results establish the following Ando-type dilation theorem on a class of right-angled Artin monoids. 

\begin{example} Let $\Gamma$ be a simple graph on $n$ vertices where $v_1v_2$ is an edge. Furthermore, suppose $v_1, v_2$ are either adjacent to all other vertices or not adjacent to any other vertices. Under this assumption, suppose a contractive representation $T:A_\Gamma^+\to\bh{H}$ satisfies that for $i=1,2$, $T|_{A_{\Gamma/\{v_i\}}^+}$ have $*$-regular dilations (as characterized by \cite{Li2017, Li2025}). Then, by Theorem~\ref{thm.main} and \ref{thm.main2}, we know $T$ must have an isometric co-extension $V$. Moreover, $V|_{A_{\Gamma/\{v_i\}}^+}$ are isometric Nica-covarian co-extensions for both $i=1,2$. 
\end{example}

As a special case where the right-angled Artin monoid is
\[A_\Gamma^+=\langle e_1,e_2,f_1,\dots,f_k:e_1e_2=e_2e_1\rangle\cong\mathbb{N}^2*\bF_k^+,\]
we obtain the following dilation theorem on $\mathbb{N}^2*\bF_k^+$.

\begin{example} 
Consider the special case where $P=\mathbb{F}_k^+$. Let
$\{S_1,S_2,T_1,\dots,T_k\}$ be contractions with $S_1S_2=S_2S_1$.
Assume that, for each $i=1,2$, the tuple $(S_i,T_1,\dots,T_k)$ is a row contraction.
Then, by Theorem~\ref{thm.main2}, there exist row isometric dilations
$(V_i,W_1,\dots,W_k)$ of $(S_i,T_1,\dots,T_k)$ such that $V_1$ and $V_2$ commute.

Comparing this new dilation result with other known results in the literature, the row contraction conditions are weaker than the $*$-regular dilation conditions derived in \cite{Li2017}. While the existence of row isometric dilation for each pair $(S_i, T)$ individually follows from Frazho-Bunce-Popescu's dilation of row contractions, it is not clear that $W_1$ and $W_2$ can be chosen to commute with each other. We would also like to point out that a result by Op\v{e}la \cite{Opela2006} states that the family $\{S_1,S_2,T_j: 1\leq j\leq k\}$ has isometric dilations (where $S_i$ are dilated to commuting isometries). However, the row isometric condition is not guaranteed. 
\end{example}

%\bibliography{ref}
%\bibliographystyle{alpha}

\end{document}